\newtheorem{prop}{Proposition}
\numberwithin{equation}{section}
\newcommand\nolabel[1]{\nonumber}
\newcommand\R{\mathbb{R}}
\newcommand\N{\mathbb{N}}
\newcommand\Q{\mathbb{Q}}
\newcommand\Z{\mathbb{Z}}
\newcommand\GI[2]{{[#1]_{#2}}}
\newcommand\id{\mathrm{id}}
\newcommand{\G}[1]{\mathcal{G}_{#1}}
\newcommand{\srpot}[1]{\mathcal{P}_{#1}}
\newcommand{\abs}[1]{\left| #1 \right| }
\numberwithin{equation}{section}
\def\Eq#1#2{\ifthenelse{\equal{#1}{*}}
  {\begin{equation*}\begin{aligned}[]#2\end{aligned}\end{equation*}}
  {\begin{equation}\begin{aligned}[]\label{#1}#2\end{aligned}\end{equation}}}
  \subjclass[2010]{26E60, 26D15}
\keywords{means, squeeze theorem, sandwich theorems, distance between means, Hardy means}
\title{Interval-type theorems concerning means}
\author[P. Pasteczka]{Pawe\l{} Pasteczka}
\address{Institute of Mathematics \\ Pedagogical University of Cracow \\ Podchor\k{a}\.zych str. 2, 30-084 Krak\'ow, Poland}
\email{ppasteczka@up.krakow.edu.pl}
\begin{document}

\begin{abstract} 
Each family $\mathcal{M}$ of means has a natural, partial order (point-wise order), that is $M \le N$ iff $M(x) \le N(x)$ for all admissible $x$.  

In this setting we can introduce the notion of interval-type set (a subset $\mathcal{I} \subset \mathcal{M}$ such that whenever $M \le  P \le N$ for some $M,\,N \in \mathcal{I}$ and $P \in \mathcal{M}$ then $P \in \mathcal{I}$). For example, in the case of power means there exists a natural isomorphism between interval-type sets and intervals contained in real numbers. Nevertheless there appear a number of interesting objects for a families which cannot be linearly ordered. 

In the present paper we consider this property for Gini means and Hardy means. Moreover some results concerning $L^\infty$ metric among (abstract) means will be obtained.
\end{abstract}

\maketitle

\section{Introduction}
It is well known that the comparability problem is one of the most extensively developed branch in the theory of means. In fact, whenever $Y$ is a family of means, we usually treat it as a partially ordered set. We will be interested in subfamilies $I \subset Y$ such that if some element of $Y$ is bounded from both sides by elements of $I$ then it itself belongs to $I$. More precisely, if for some $y \in Y$ there exists $y_l,y_u \in I$ such that $y_l \le y \le y_u$, then $y \in I$ too; such kind of condition is very characteristic for intervals, therefore we will call $I$ an \emph{interval-type set in $Y$}. 
  
The simplest examples of interval-type sets appear to be the natural generalization of intervals. Indeed, for every order-preserving embedding $Y \subset X$ (for example we can assign $Y=\Q$ and $X=\R$) and every $p,q \in X$, the set $[p,q]_Y:=\{y \in Y \colon p \le y \le q \}$ is an interval-type set in $Y$. 
Similarly, for $p \in X$, one can define $[p,+\infty)_Y:=\{y \in Y \colon p \le y\}$ etc; all these objects are interval-type sets in $Y$. 

Notice that if $\le$ is a linear order in $Y$ and $X=Y$ then these definitions coincide with 
the standard one. Moreover, if $I_1$ and $I_2$ are interval-type sets in $Y$ then so is $I_1 \cap I_2$, similarly a union of increasing sequence of interval-type sets in $Y$ is an interval-type set in $Y$ too. 
Finally if $Y \subseteq X$ and $p,q,r \in X$ with $p \le q \le r$ then $[p,q]_Y \cup [q,r]_Y \subseteq [p,r]_Y$ but, contrary to classic intervals, equality does not hold in a general case (even if $Y=X$). Furthermore it could be easily proved that if $I\subset Y\subset X$ then 
\Eq{ITSrest}{
\qquad &I \text{ is an interval-type set in }X\\
\Longrightarrow \quad I \cap &Y \text{ is an interval-type set in }Y.
}

In fact if $\le$ is a linear order then, roughly speaking, this definition reduces to common intervals.
The situation becomes much more interesting when $\le$ is just a partial ordering. For example if the inclusion is considered as the order, then each filter and each ideal is an interval-type set. 

From our point of view the most interesting set is a family of functions. There appears a natural order (point-wise order) that could be imprecisely define as $f \le g$ if and only if $f(x) \le g(x)$ for all $x$. In this sense all families denoted in the literature by $\mathcal{O}(\cdot)$ or $o(\cdot)$ are in fact interval-type sets. Additionally in the family of functions we have a number of heterogeneous interval-type sets like: functions that are convergent to a certain point (it is exactly what does squeeze theorem claim), bounded functions, $L^p$ spaces (in the family of all measurable functions).

This order is also used for comparing means -- this is in fact the setting we are heading towards. Indeed, comparability property is quite rare in the family of means, whence such a relation is a very natural partial order. Therefore from now on we are going to focus on a family of means only; remarkably we treat them simply as a functions and we do not assume any extra properties. Some families of means are mentioned in this paper just to provide both motivation and background of presented results -- for their precise definitions we refer the reader to the classical monography \cite{Bul03}.

Let me emphasize that there is no universal definition of the mean. Means are defined, in different places, for a various domain i.e. a vector of length two, vector of an arbitrary length, probabilistic measure, vector with weights (we usually use the prefix 'weighted' in this case); in fact {\it all} these definitions appear in \cite{Bul03}. Moreover there are a lot of axioms which are required, in different places, to call a function the mean -- the only which is beyond questioning is that the mean achieve an intermediate value between the minimal and maximal entry\footnote{This assumption was omitted for example by Matkowski in \cite{Mat14} but the name 'pre-mean` was used in that case.} and, surprisingly, continuity (usually it is not explicitly given as an axiom, however almost all means considered in the literature are continuous).

This situation leads to, sometimes confusing, misunderstandings. For example it is well known that so-called Gaussian product of arithmetic and harmonic means (known also as arithmetic-harmonic mean) is a geometric mean. However this equality holds only in a two variable setting. For more variables (in the spirit of Gustin \cite{Gus47}) arithmetic-harmonic mean equals a Gini mean $\mathcal{G}_{-1,1}$, which {\it coincide} with geometric one in a two variables case. On the other hand some means are defined from a certain number of parameters, for example some generalization of quasi-arithmetic means introduced by Sadikova \cite{Sad06} -- this definition was fulfilled, in a particular case, in \cite{Pas15b} to obtain some results concerning Hardy property. Alternatively the number of parameters can be estimated from above, for example in Hamy means and Hayashi means.

Despiting this drawback, there are usually no difficulties when it comes to define comparability between means. Namely, for two means (say $M$, $N$) we denote $M \le N$ if and only if $M(x)\le N(x)$ for all $x$ belonging to an {\it intersection} of domains of $M$ and $N$; such an assumption is so natural that it is usually skipped, however it is made even in a classical Cauchy's inequality (cf. \cite[p. 203]{Bul03}). On the other hand, power means $\srpot{1}$ and $\srpot{3}$ are not comparable when considered on reals. This causes essential problem that inequalities $M \le N$ and $N \le P$ do {\it not} imply the inequality $M \le P$. A very simple example is the pair $\srpot{1},\srpot{3}$ considered once on reals, and in the other case jointly with $\srpot{2}$ -- this time, to provide the meaningfulness of inequalities $\srpot{1}\le\srpot{2}$ and $\srpot{2}\le\srpot{3}$, all three means need to be considered on positive numbers only.

Due to this fact each time we are dealing with the comparability of means, we need to declare their domain.

Another domain-type problem can be observed when we look thorough the result concerning comparability in a family of Gini means.
In this family the solution of comparability problem changes dramatically while we are changing the number of arguments. Notice that, excluding the case of two- and an arbitrary number of arguments, this problem remains open (see \cite{Pal88c,Pal13ICFEI}). However, by the virtue of Jensen's inequality, this is not the case for quasi-arithmetic means. In this family comparability problems for vectors of length two, vectors of any (either fixed or arbitrary) length, their weighted counterparts and for measures are all equivalent among each other.

To continue dealing with means theory, let us make a short order-theory intermezzo concerning generalized intervals.
Let $(X,\le)$ and $(Y,\le)$ be two partially ordered sets (POSETs) such that either $X \subseteq Y$ or $Y \subset X$ and the order $\le$ coincide on $X \cap Y$. Denote
\Eq{*}{
\GI YX:=\{ x \in X \colon y_l \le x \le y_u \text{ for some } y_l,y_u \in Y\}=\bigcup_{y_l,y_u \in Y} [y_l,y_u]_X;
}
the order $\le$ in the definition is taken on $X \cup Y$.

In such case any consideration would go twofold, luckily the first case is simple and we are going to rule it out shortly. Indeed, for $X \subset Y$ we have
\Eq{*}{
X \subseteq X \cap Y \subseteq \GI YX \subseteq X.
}
Thus $\GI YX=X$ whenever $X\subseteq Y$. Therefore the only interesting case is $Y \subset X$.  Having this condition satisfied, it is natural to ask when $\GI YX$ is the smallest possible, that is $\GI YX =Y$. Whenever this equality holds we say that $Y$ is an \emph{interval-type in $X$} or, briefly, \emph{$X$-interval-type}.

Saying nothing of how common (or uncommon) this theory is, there are a number of interval-type sets among functional spaces (recall that in this setting $\le$ is a standard pointwise order). In fact we can just reformulate examples that are given above as interval-type sets, that is: functions that are converging to a certain (fixed) point, bounded functions, $L^p$ spaces (in a family of measurable functions).

In what follows we will investigate interval-type sets for various families of means. Some of results are either simple of just a new wording of known results. All these facts will be enclosed in the following section as well as new interval-type properties for means.


\section{Interval-type sets among means}

Many families of means has an order that is isomorphic to $(\N,\le)$ (for example symmetric polynomial means)
or $(\R,\le)$ (for example power means). Such kinds of families are, from our point of view, trivial cases. Obviously it does not mean that proving these inequalities is immediate. Conversely, these families are so well characterized that our theory, at least in my opinion, does not convey any additional knowledge in this case. At the moment each subsection will be devoted to different family of means. 

Let me emphasize that whenever $X$ is a family of means and $P ,Q \in X$ with $P \le Q$ then $[P,Q]_X$ are simply all intermediate means between $P$ and $Q$ that belong to $X$. If the inequality $P \le Q$ is not satisfied then $[P,Q]_X$ is empty.

In sections \ref{sec:Hardy}--\ref{sec:Finite} we avoid making any unnecessary assumptions, therefore that not all considered objects are means. Nevertheless, by the virtue of \eqref{ITSrest}, one can always add more assumptions, whenever it is required.

\subsection{Gini means}

Gini means were first considered by Gini \cite{Gin38} as a generalization of Power means. For $p,q \in \R$ and all-positive-entry vector $v=(v_1,\dots,v_n)$, $n \in \N$ they equal
\Eq{*}{
\G{p,q} (v_1,\dots,v_n)=\begin{cases}
\left(\frac{v_1^p+\cdots+v_n^p}{v_1^q+\cdots+v_n^q}\right)^{1/(p-q)} & \textrm{if\ }p \ne q\,,
\\
\exp \left(\frac{v_1^p \ln v_1+\cdots+v_n^p\ln v_n}{v_1^p+\cdots+v_n^p}\right)& \textrm{if\ }p = q\,.
                        \end{cases}
}

It is just a simple calculation that $\G{p,q}=\G{q,p}$. From our point of view the order in this family is the most important one. It is known (cf. \cite{DarLos70}) that for all $p,q,p',q' \in \R$,
\Eq{Ginicomp}{
\G{p,q} \le \G{p',q'} \iff \min(p,q) \le \min(p',q') \wedge \max(p,q) \le \max(p',q').
}
Therefore we can leave the definition of means itself and deal with the order $\prec$ on $\R^2$ defined exactly like the right hand side in \eqref{Ginicomp}. However it is difficult to characterize all interval-type sets in this family. We will focus just on the simplest case.

\begin{prop}
\label{Prop:Gini_Interval}
Let $p,q,r,s \in \R$, $p\le s$, and $q,r \in [p,s]$. Then $\G{p,q} \le \G{r,s}$ and 
 \Eq{Gini_Interval}
 {
 [\G{p,q},\G{r,s}]_{\G{}}=\{\G{x,y} \colon (x,y) \in [p,r]\times [q,s] \cup [q,s] \times [p,r]\}.
 }
\end{prop}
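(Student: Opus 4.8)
The plan is to translate everything into the language of the partial order $\prec$ on $\R^2$ defined by the right-hand side of \eqref{Ginicomp}, since by \eqref{Ginicomp} the map $(x,y)\mapsto\G{x,y}$ is an order isomorphism between $(\R^2/\!\!\sim,\prec)$ (identifying $(x,y)$ with $(y,x)$) and the family $\G{}$ with its pointwise order. Concretely, writing $a=\min(p,q)$, $b=\max(p,q)$ and $c=\min(r,s)$, $d=\max(r,s)$, the hypothesis $p\le s$ and $q,r\in[p,s]$ should be unwound to show $a\le c$ and $b\le d$, which by \eqref{Ginicomp} gives $\G{p,q}\le\G{r,s}$. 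So the first step is this bookkeeping: check that under the stated hypotheses the ``sorted'' coordinates of $(p,q)$ are coordinatewise below those of $(r,s)$.

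Next I would prove the set equality \eqref{Gini_Interval} as a statement about $\prec$. The left-hand side is $\{(x,y) : (p,q)\prec(x,y)\prec(r,s)\}$ (up to the symmetry swap), and since $\G{x,y}$ depends only on the unordered pair, this set equals $\{(x,y) : a\le\min(x,y),\ \max(x,y)\le d\} = [a,d]\times[a,d]$ intersected with the requirement that \emph{also} $(x,y)\prec(r,s)$ from above and $(p,q)\prec(x,y)$ from below — wait, more carefully: $(p,q)\prec(x,y)$ means $a\le\min(x,y)$ and $b\le\max(x,y)$, while $(x,y)\prec(r,s)$ means $\min(x,y)\le c$ and $\max(x,y)\le d$. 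So the interval-type set, as a subset of sorted pairs $\{(u,v):u\le v\}$, is exactly $\{(u,v): a\le u\le c,\ b\le v\le d\}$. The right-hand side of \eqref{Gini_Interval} is $[p,r]\times[q,s]\cup[q,s]\times[p,r]$; after symmetrization this is the set of sorted pairs $(u,v)$ with $u,v$ each lying in $[p,r]\cup[q,s]$ and with one of them forced into $[p,r]$, the other into $[q,s]$. The crux is therefore a purely elementary claim: under $p\le s$, $q,r\in[p,s]$, we have
\[
\bigl\{(u,v): u\le v,\ a\le u\le c,\ b\le v\le d\bigr\}
=\bigl\{(u,v): u\le v,\ (u,v)\in[p,r]\times[q,s]\cup[q,s]\times[p,r]\bigr\}.
\]
I would verify this by splitting into the two cases $q\le r$ and $r<q$ (equivalently, whether the ``middle'' two of $p,q,r,s$ are ordered as $q\le r$ or $r\le q$), computing $a,b,c,d$ explicitly in each case, and checking both inclusions directly; each case reduces to comparing a handful of $\min$/$\max$ expressions.

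The main obstacle, and the only place any real care is needed, is the inclusion $\supseteq$ together with the $u\le v$ normalization: one must make sure that every sorted pair $(u,v)$ with $(u,v)\in[p,r]\times[q,s]\cup[q,s]\times[p,r]$ actually satisfies $a\le u\le c$ and $b\le v\le d$, and conversely that no sorted pair in the box $[a,c]\times[b,d]$ is missed by the union of the two rectangles — this is where the hypotheses $q,r\in[p,s]$ get used in an essential way (if, say, $r$ were not $\ge p$, the rectangles could fail to cover the box). I expect that once the case distinction $q\le r$ versus $q>r$ is made, each verification is a short computation, so the proof is genuinely routine modulo careful handling of the $\min$/$\max$ and the symmetrization; the conceptual content is entirely in recognizing that \eqref{Ginicomp} lets us replace means by their parameter pairs and that interval-type sets then correspond to coordinatewise ``boxes'' in the sorted coordinates.
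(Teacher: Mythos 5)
Your proposal is correct and follows essentially the route the paper indicates: reduce everything to the parameter order $\prec$ via \eqref{Ginicomp} (under the hypotheses the sorted coordinates are just $a=p$, $b=q$, $c=r$, $d=s$) and then check the set identity by the two cases $q\le r$ and $r<q$, which is precisely the case split $p\le q\le r\le s$ versus $p\le r<q\le s$ that the author mentions before omitting the elementary verification. The mid-paragraph self-correction resolves to the right characterization $\{(u,v):u\le v,\ p\le u\le r,\ q\le v\le s\}$, so no gap remains.
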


The assumption involving $p,q,r,s$ seamed to be restrictive, nevertheless it appears very naturally when we take into account the requirement of comparability between $\G{p,q}$ and $\G{r,s}$ and the mentioned symmetry $\G{p,q}=\G{q,p}$. 

To prove this proposition we need to consider two cases $p \le q \le r \le s$ and $p \le r < q \le s$. However, by the virtue of \eqref{Ginicomp}, this proof is elementary and we will omit it. 

Let us present another kind of interval-type sets for Gini means
\begin{prop}
Let $f \colon (0,\infty) \to (0,\infty)$, $g \colon (-\infty,0) \to (-\infty,0)$ be two continuous, decreasing functions with $f(0^+)=+\infty$, $g(0^-)=-\infty$ and $f \circ f=\id$, $g \circ g=\id$.
The following sets are of $\G{}$-interval-type
\Eq{*}{
\Big\{\G{x,y} &\colon x\le 0 
\text{ or }\big(  x >0 \text{ and } y \le f(x)\big) \Big\}, \\
\Big\{\G{x,y} &\colon x \ge 0 \text{ or } \big( x<0 \text{ and } y \ge g(x) \big) \Big\}, \\
\Big\{\G{x,y} &\colon \big( x<0 \text{ and } y \ge g(x) \big) \text{ or } x=0  \text{ or } \big( x >0 \text{ and } y \le f(x) \big) \Big\}.
}
\end{prop}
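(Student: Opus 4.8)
The plan is to verify the interval-type property directly from the order-characterization of Gini means given in \eqref{Ginicomp}, working entirely with the partial order $\prec$ on $\R^2$ (where $(p,q)\prec(p',q')$ iff $\min(p,q)\le\min(p',q')$ and $\max(p,q)\le\max(p',q')$) rather than with the means themselves. For each of the three sets, call it $\mathcal{S}$, and writing $S\subset\R^2$ for the corresponding parameter set modulo the symmetry $(x,y)\sim(y,x)$, I must show: if $(a,b),(c,d)\in S$ and $(x,y)\in\R^2$ satisfies $(a,b)\prec(x,y)\prec(c,d)$, then $(x,y)\in S$. Since the definition of $\prec$ depends only on $m:=\min(x,y)$ and $M:=\max(x,y)$, it is convenient throughout to parametrize points by the pair $(m,M)$ with $m\le M$; the conditions defining $S$ should first be rewritten in these coordinates. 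For instance, the first set becomes roughly $\{(m,M)\colon m\le 0\}\cup\{(m,M)\colon m>0 \text{ and } M\le f(m)\}$, using that $f$ maps $(0,\infty)$ to $(0,\infty)$ and is an involution, so the ``upper branch'' $y=f(x)$ in the original coordinates is a single decreasing curve in the open first quadrant symmetric about the diagonal.

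I would then treat the three sets in increasing order of difficulty. The first two are mirror images of each other (swap $x\leftrightarrow -x$, $y\leftrightarrow -y$, $f\leftrightarrow -g(-\cdot)$), so it suffices to do the first and remark on the symmetry; the third is the ``intersection-like'' combination of the two one-sided regions together with the axis $x=0$, and I expect it will follow by combining the arguments for the first two — indeed the third set is essentially (first set) $\cap$ (second set) after the coordinates are sorted, and intersections of interval-type sets are interval-type by the remark in the introduction, so the main content is really just the first set. For the first set: given $(a,b)\prec(x,y)\prec(c,d)$ with both endpoints in $S$, split on the sign of $m=\min(x,y)$. If $m\le 0$ we are immediately in $S$. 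If $m>0$, then from $(a,b)\prec(x,y)$ we get $\min(a,b)\le m$, so $\min(a,b)>0$, forcing $(a,b)$ to lie on the upper branch: $\max(a,b)\le f(\min(a,b))$. We also have $\min(c,d)\ge m>0$, so $(c,d)$ is on the upper branch too: $\max(c,d)\le f(\min(c,d))$. Now $(x,y)\prec(c,d)$ gives $M=\max(x,y)\le\max(c,d)\le f(\min(c,d))\le f(m)$, the last step because $f$ is decreasing and $\min(c,d)\ge m$. Hence $M\le f(m)$, i.e.\ $(x,y)\in S$.

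The genuinely delicate point — and the place I'd be most careful — is the monotonicity bookkeeping: one must use that $f$ is decreasing in exactly the right direction, and that the hypotheses $f(0^+)=+\infty$, $g(0^-)=-\infty$ guarantee the branch curves actually separate the quadrant as claimed so that no point ``escapes'' the region by going around an endpoint of the curve. The continuity and the involutivity $f\circ f=\id$ are what make the region well-defined and symmetric under $(x,y)\mapsto(y,x)$ (since $y\le f(x)\iff x\le f(y)$ for $x,y>0$), which is essential because a set of Gini means is automatically symmetric in its parameters. I do not anticipate needing anything beyond \eqref{Ginicomp}; no analytic properties of the means enter, only the combinatorics of the two-dimensional order together with the fact that each defining region is a ``down-set in the $M$-coordinate over an up-set in the $m$-coordinate,'' which is precisely the shape that makes it interval-type. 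The only routine-but-tedious residue is checking the boundary cases ($m=0$ exactly, or $(x,y)$ exactly on a branch curve), which follow from the weak inequalities and continuity without difficulty.
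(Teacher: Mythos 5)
Your proposal is correct and takes essentially the same route as the paper: everything is reduced to the parameter order via \eqref{Ginicomp}, the key step is the chain $M\le\max(c,d)\le f\bigl(\min(c,d)\bigr)\le f(m)$ coming from the decreasingness and involutivity of $f$ (the paper's $y_0\le s\le f(r)\le f(x_0)$), the second set is handled by the sign-flip symmetry, and the third as the intersection of the first two. One harmless slip: the inference ``$\min(a,b)\le m$ and $m>0$, so $\min(a,b)>0$'' is false (the lower endpoint may well have a nonpositive parameter), but this does not damage the argument because the resulting claim that $(a,b)$ lies on the upper branch is never used --- as in the paper, only the upper endpoint $(c,d)$ enters the final estimate.
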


\begin{proof}
 Denote these sets by $X$, $Y$ and $X \cap Y$, respectively. Suppose that $\G{p,q}, \G{r,s} \in X$ for some $p,q,r,s$. We can assume without loss of generality that $p\le q$, $r\le s$ and $q \le s$. As these means are comparable, we immediately obtain the inequality $p \le r$. By Proposition~\ref{Prop:Gini_Interval}
  we need to prove that
  \Eq{ENonint_1}{
\{\G{x,y} \colon (x,y) \in [p,r]\times [q,s] \cup [q,s] \times [p,r]\}=[\G{p,q},\G{r,s}]_{\G{}} \subset X.
}
As $X$ covers {\it all} parameters in second, third, and fourth quarter  we can assume that all $p,q,r,s$ are non-negative.
Due to the fact that $\G{r,s} \in X$ we get $s \le f(r)$ or, equivalently (by the assumptions) $r \le f(s)$.
Now, for $(x_0,y_0) \in [p,r]\times [q,s]$, by decreasingness of $f$ we obtain
\Eq{*}{
y_0 \le s \le f(r) \le f(x_0).
}
Similarly for $(x_0,y_0) \in [q,s] \times [p,r]$ we get $y_0 \le r \le f(s) \le f(x_0)$, which
provides \eqref{ENonint_1}. The second case is analogous, while the third one is just a combining of first two cases.
\end{proof}

\subsection{\label{sec:Hardy}Hardy means}

Let us now consider so-called Hardy property of Means. Let $M \colon \bigcup_{i=1}^n \R_{+} \rightarrow \R_{+}$ be a mean. Let $H_M$ be the smallest extended real number satisfying
\Eq{*}
{
 \sum_{n=1}^\infty M(v_1,\dots,v_n) \le H_M \sum_{i=1}^\infty v_n \text{ for all } v \in \ell^1(\R_+).
}
We call $M$ to be the \emph{Hardy mean} if $H_M<+\infty$; the number $H_M$ is called the \emph{Hardy constant} of $M$. 
The definition of Hardy means was first introduced by P\'ales and Persson in \cite{PalPer04} but it was developed since 1920s, when Hardy constants for Power means were given in a series of papers \cite{Har20,Lan21,Car32,Har25a,Kno28}; more details about interesting history of this result can be found in catching surveys \cite{PecSto01,DucMcG03} and in a recent book \cite{KufMalPer07}. Term Hardy constant was introduced recently in \cite{PalPas16}.

Let us consider a family of sets 
\Eq{*}{
H_{\cdot}^{-1}(I):=\{M \colon H_M \in I\} \text{ for }I \subset (0,+\infty]
}
and the family of all Hardy means $\mathcal{H}$, that is $\mathcal{H}:=H_{\cdot}^{-1}(0,+\infty)$.

We are going to prove the following statement
\begin{prop}
For every (extended) interval $I \subset [0,+\infty]$ the family $H_{\cdot}^{-1}(I)$ is an interval-type set in a family of all means. In particular $\mathcal{H}$ is an interval-type set.
\end{prop}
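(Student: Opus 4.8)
My plan is to reduce the whole statement to a single monotonicity fact: the assignment $M \mapsto H_M$ is order-preserving for the pointwise order, i.e. $M \le N$ forces $H_M \le H_N$. Once this is available the proposition follows for free, since the preimage of an interval under an order-preserving map into a linearly ordered set is automatically of interval-type. Indeed, suppose $M \le P \le N$ with $M,N \in H_\cdot^{-1}(I)$ and $P$ an arbitrary mean; then monotonicity gives $H_M \le H_P \le H_N$, and because $I$ is an interval in the chain $[0,+\infty]$ it contains the whole segment $[H_M,H_N]$, hence $H_P \in I$ and $P \in H_\cdot^{-1}(I)$. Specializing to $I=(0,+\infty)$ then handles $\mathcal{H}$.

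To obtain the monotonicity I would argue directly from the definition. Take means $M \le N$ on the common domain $\bigcup_{n=1}^\infty \R_+^n$; note that all objects occurring in the definition of the Hardy constant live on this domain, so the pointwise order here carries no domain-intersection subtlety. For each $v \in \ell^1(\R_+)$ and each $n$ we have $M(v_1,\dots,v_n) \le N(v_1,\dots,v_n)$; summing over $n$ and invoking the inequality defining $H_N$ yields
\[
\sum_{n=1}^\infty M(v_1,\dots,v_n) \le \sum_{n=1}^\infty N(v_1,\dots,v_n) \le H_N \sum_{i=1}^\infty v_i .
\]
Hence $H_N$ is one of the extended reals admissible in the definition of $H_M$, and since $H_M$ is the least admissible one, $H_M \le H_N$.

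I do not expect a genuine obstacle here: the content is the single observation that summing a termwise inequality transfers an admissible Hardy constant from the larger mean down to the smaller one. The only point deserving a moment of care is that $H_M$ is defined as an infimum that need not be attained, which is why the argument should be phrased via ``admissible constants'' rather than by substituting $H_M$ itself into an inequality for $N$; the case $H_N=+\infty$ is covered trivially, and the same reasoning applies verbatim whether $I$ is open, closed, or half-open at either endpoint.
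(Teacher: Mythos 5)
Your proposal is correct and follows essentially the same route as the paper: both reduce the claim to the monotonicity $M\le N \Rightarrow H_M\le H_N$, proved by summing the termwise inequality and invoking the minimality of the Hardy constant. Your explicit framing via preimages of intervals under an order-preserving map, and the remark about arguing with admissible constants rather than the (possibly non-attained) infimum, are minor refinements of the same argument.
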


\begin{proof}
 Take any mean $M$ and $P,\,Q$ satisfying $P\le M \le Q$.
It is sufficient to prove that 
 \Eq{Har_bas}
 {
 H_M\in [H_P,H_Q].
 }
 Indeed, for all $v \in \ell_1(\R_+)$ we have
 \Eq{*}
 {
  \sum_{n=1}^\infty M(v_1,\dots,v_n) \le   \sum_{n=1}^\infty Q(v_1,\dots,v_n) \le H_Q \sum_{i=1}^\infty v_n.
 }
By the definition $H_M$ is the smallest constant satisfying such an inequality thus $H_M \le H_Q$.
Analogously we can prove $H_P \le H_M$, which implies \eqref{Har_bas}.

 \end{proof}

\subsection{\label{sec:Finite}Finite distance between means}

For the purpose of this section $I$ is an interval and a domain of mean is an arbitrary set $\mathcal{D}$, that is $M$ is any function $M \colon \mathcal{D} \to I$ (it is natural to denote this family as $I^\mathcal{D}$), in particular all means are defined on {\it the same} domain.

In the spirit of \cite{CarShi64,CarShi69} let us define a metric among quasi-arithmetic means as a maximal possible difference between their value. This definition could be easily adapted for a general case. More precisely, for $M,N \in I^\mathcal{D}$ we define the distance as an $L^\infty$-norm,
\Eq{*}{
\rho(M,N):=\sup_{x \in \mathcal{D}} \abs{M(x)-N(x)}.
}
It is easy to prove that $\rho$ is an extended metric on the space $I^\mathcal{D}$. Therefore we can define neighborhoods of functions (each function is treated as a single point in the space) in the following way
\Eq{*}{
B_r(M)&:=\{N \in I^\mathcal{D} \colon \rho(M,N)<r\},\\
\overline{B}_r(M)&:=\{N \in I^\mathcal{D} \colon \rho(M,N)\le r\}.
}
If the set $I$ is unbounded, it can happen that the distance between means is not bounded i.e. $\rho(M,N)=+\infty$. Therefore it is also reasonable to define
\Eq{*}{
B(M):=\{N \in I^\mathcal{D} \colon \rho(M,N)<+\infty\}=\bigcup_{r>0} B_r(M).
 }
 
 Having these definitions there holds the following
 
\begin{prop}
 For each mean $M$ the set $B(M)$ is an $I^\mathcal{D}$--interval-type family, moreover for every positive number $r$, sets $B_r(M)$ and $\overline{B}_r(M)$ are $I^\mathcal{D}$--interval-type families. 
\end{prop}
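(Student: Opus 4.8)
The plan is to reduce everything to a single elementary observation about the triangle inequality for $\rho$, exactly mirroring the structure of the proof for the Hardy-constant proposition. Fix a mean $M$ and suppose $P \le N \le Q$ with $P, Q \in I^{\mathcal{D}}$; we must show that $N$ inherits whichever of the three membership conditions $P$ and $Q$ share. The key pointwise estimate is that for every $x \in \mathcal{D}$ we have $P(x) \le N(x) \le Q(x)$, hence
\Eq{*}{
\abs{M(x) - N(x)} \le \max\bigl( \abs{M(x)-P(x)}, \abs{M(x)-Q(x)} \bigr) \le \max\bigl( \rho(M,P), \rho(M,Q) \bigr).
}
Indeed, if $N(x) \ge M(x)$ then $N(x) - M(x) \le Q(x) - M(x) \le \abs{M(x)-Q(x)}$, and if $N(x) \le M(x)$ then $M(x) - N(x) \le M(x) - P(x) \le \abs{M(x)-P(x)}$; in both cases the bound holds because $M(x) \in I$ and $I$ is an interval, so $M(x)$ lies between $P(x)$ and $Q(x)$ or on the appropriate side.

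Taking the supremum over $x \in \mathcal{D}$ gives $\rho(M,N) \le \max\bigl(\rho(M,P), \rho(M,Q)\bigr)$. From this single inequality all three assertions follow at once. If $P, Q \in \overline{B}_r(M)$, i.e. $\rho(M,P) \le r$ and $\rho(M,Q) \le r$, then $\rho(M,N) \le r$, so $N \in \overline{B}_r(M)$. If $P, Q \in B_r(M)$, then $\rho(M,P) < r$ and $\rho(M,Q) < r$, so their maximum is $< r$ and $N \in B_r(M)$. Finally, if $P, Q \in B(M)$, both distances are finite, so their maximum is finite and $N \in B(M)$; alternatively one invokes $B(M) = \bigcup_{r>0} B_r(M)$ together with the already-proved interval-type property of each $B_r(M)$ and the remark from the introduction that an increasing union of interval-type sets is interval-type.

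I expect no serious obstacle here; the only point requiring a moment's care is the casework establishing the pointwise bound, and in particular making sure it is valid without assuming $M$ itself lies between $P$ and $Q$ (it need not, since $M$ is just some fixed mean, not necessarily comparable to $P$ or $Q$). The resolution is precisely that $\abs{M(x)-N(x)}$ is always dominated by the larger of $\abs{M(x)-P(x)}$ and $\abs{M(x)-Q(x)}$ whenever $N(x)$ lies in the closed interval with endpoints $P(x)$ and $Q(x)$ — a purely one-dimensional fact about real numbers — so comparability of $M$ with the bounds is never needed. Once that lemma-step is in hand the rest is immediate, and I would present the argument in essentially the compact form above.
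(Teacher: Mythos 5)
Your proof is correct. The paper actually omits the proof of this proposition (declaring it elementary), and your argument --- the pointwise estimate $\abs{M(x)-N(x)} \le \max\bigl(\abs{M(x)-P(x)},\abs{M(x)-Q(x)}\bigr)$ for $N(x)$ between $P(x)$ and $Q(x)$, followed by taking suprema --- is exactly the natural one-line argument the author presumably had in mind, and your explicit care that $M$ need not be comparable with $P$ and $Q$ is a worthwhile clarification.
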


The proof of this proposition is elementary and we omit it. 

\def\cprime{$'$} \def\R{\mathbb R} \def\Z{\mathbb Z} \def\Q{\mathbb Q}
  \def\C{\mathbb C}
\newpage

\end{document}